\newcommand{\bm}[1]{\mathbf{#1}}
\newcommand{\bell}{\boldsymbol{\ell}}
\newcommand{\R}{{\mathbb R}}
\newcommand{\cI}{{\mathcal{I}}}
\newcommand{\e}{{\mathrm{e}}}
\newtheorem{remark}{Remark}[section]
\numberwithin{equation}{section}
\begin{document}

\title{Identifying the lights position in photometric stereo \\
under unknown lighting}

\author{A.Concas\thanks{Department of Mathematics and Computer Science,
University of Cagliari, viale Merello 92, 09123 Cagliari, Italy. E-mail:
\texttt{anna.concas@unica.it}, \texttt{kate.fenu@unica.it},
\texttt{rodriguez@unica.it}.}
\and
R. Dess\`i\thanks{Departiment of Electrical and Electronic Engineering,
University of Cagliari, Piazza D'Armi, 09123 Cagliari, Italy. E-mail:
\texttt{richidessi@gmail.com}, \texttt{vanzi@diee.unica.it}.}
\and
C. Fenu\footnotemark[1]
\and
G. Rodriguez\footnotemark[1]
\and
M. Vanzi\footnotemark[2]}

\maketitle

\begin{abstract}
Reconstructing the 3D shape of an object from a set of images is a classical
problem in Computer Vision.
Photometric stereo is one of the possible approaches.
It stands on the assumption that the object is observed from a fixed point of
view under different lighting conditions.
The traditional approach requires that the position of the light sources is
accurately known.
It has been proved that the lights position can be estimated directly from the
data when at least 6 images of the observed object are available.
In this paper, we present a Matlab implementation of the algorithm for solving
the photometric stereo problem under unknown lighting, and propose a simple
shooting technique to solve the bas-relief ambiguity.
\end{abstract}

\begin{keywords}
photometric stereo, low-rank approximation, finite differences, shape from
shading, computer vision.
\end{keywords}

\section{Introduction}\label{sec:intro}

\emph{Shape from shading} is a typical problem in Computer
Vision~\cite{horn1989,klette1998,slabaugh2001}.
It exploits shading information for recovering the 3D shape of an object from a
from a set of discretized 2D projections (e.g., digital pictures), when the
object is illuminated by a single light source.
If only one picture is available, the \emph{shape from shading} problem is not
well posed, as its solution is not unique.
A way to solve this problem and remove the uncertainty is to use more than one
image. 
There are two main possible approaches to the problem.
Stereo vision, sometimes generalized to multiple views vision or
\emph{multiview}~\cite{dyer2001}, assumes the availability of different
observations of an object obtained by varying the point of view, but not the illumination.
The pictures are typically obtained by a set of fixed cameras, or extracted
from the frames of a movie shot by a moving camera.
Photometric stereo (PS) \cite{woodham1980} is a \emph{shape from shading}
method that uses a fixed camera and a movable light to acquire a set of images
that embed shape and color (albedo) information of the framed object
\cite{christensen1994}.
The ideal PS requires the lights position and intensity to be accurately known
\cite{barsky2003}, a deviation from this requirement often results in a
distorted reconstruction.

Various attempts have been made to estimate the lights position directly from
the data; see, e.g., \cite{basri2007,chen2006} where a linear combination of
special functions (spherical harmonics) is employed.
The problem was solved in \cite{hayakawa1994}, where it was shown that at
least 6 images under different lighting conditions are needed.
Such a result releases the constraint on the precise positioning
of the light sources, making the acquisition process much simpler.

We devoted various papers to the application of photometric stereo to rock art
documentation, in particular to the 3D reconstruction of the decorations found 
in the ``Domus de Janas'', a particular kind of Neolithic tombs typical of
Sardinia, Italy \cite{dmrtv15,mrtv15,mrv16,vmdrt14}. 
Some B.S. students were also involved in this study 
\cite{dessi14,floris17,mannu14,stocchino15}.
In the archaeology setting, a 3D restoration technique based on easy data
acquisition is crucial, because the findings are frequently located in
uncomfortable positions and the current protocols exclude any physical contact
for creating replicas. 
The difficulty to access specific sites, often associated to a large number of
items to be documented, makes it impractical to use other 3D reconstruction
techniques, like laser scanning, characterized by long acquisition time and
large instrumentation cost.
Indeed, cheap instrumentation would allow for parallel operation on different
findings by a team of researchers.

In this paper we present a Matlab implementation for the solution of the
photometric stereo problem under unknown lighting.
First, the normal vector field is determined by a technique due to
Hayakawa~\cite{hayakawa1994}, which can estimate the lights position directly
from the data when at least 6 images of the observed object are available.
Then, an explicit representation of the observed surface, based on an
orthographic projection, is obtained by integrating the normal field, 
This is done by solving a Poisson equation with either Dirichlet or Neumann
boundary conditions.
Finally, we propose a practical shooting procedure to circumvent the well known
bas-relief ambiguity \cite{basrelief99}, and demonstrate the performance of the
developed software on both synthetic and experimental data sets.

Section~\ref{sec:notation} introduces the notation adopted in the paper, while
Section~\ref{sec:known} resumes the solution of the problem when the lights
position is known in advance.
Photometric stereo under unknown lighting is studied in
Section~\ref{sec:unknown}, where we recall a theorem which gives sufficient
conditions for the existence of the solution.
Section~\ref{sec:orient} illustrates a procedure for determining the right
orientation of the surface.
A selection of numerical results are illustrated in Section~\ref{sec:examples},
and Section~\ref{sec:future} describes possible future developments.

\section{Notation}\label{sec:notation}

Let us consider an object placed at the origin of a reference system in $\R^3$.
The object is observed from a fixed camera, the $z-$axis coincides with the
optical axis and it is directed from the object to the camera.
The point of view is assumed to be at infinite distance from the observed
object (ortographic projection), and different pictures of the object are
taken, each one with a different light direction.
Each image has resolution $(r+2)\times(s+2)$, and the real length of the
horizontal side of each image is $A$.
Assuming the pixels to be square, we let the length of the vertical side be
$B=(s+1)h$, with $h=A/(r+1)$.
Each picture defines a domain $\Omega=[-A/2,A/2]\times[-B/2,B/2]$, and induces
the discretization
\begin{equation}
\begin{aligned}
x_i &= -A/2 + ih, \quad i =0,\ldots,r+1,\\  
y_j &= -B/2 + jh, \quad j=0,\ldots,s+1.
\end{aligned}
\label{mesh}
\end{equation}


We let the surface of the object be represented by $z=u(x,y)$, with
$(x,y)\in\Omega$. Then,
\begin{equation}\label{gradnorm}
\nabla u(x,y) = \begin{bmatrix} 
\displaystyle \frac{\partial u(x,y)\strut}{\partial x\strut} \\ 
\displaystyle \frac{\partial u(x,y)\strut}{\partial y\strut}
\end{bmatrix}
= \begin{bmatrix} u_x\strut \\ u_y\strut \end{bmatrix}, \quad
\bm{n}(x,y) = \frac{(-u_x,-u_y,1)^T}{\sqrt{1+\|\nabla{u}\|^2}},
\end{equation}
denote the gradient of $u$ and the normal vector to the surface of the object,
respectively.

As it is customary, images are vectorized by ordering the pixels
lexicographically.
The pixel of coordinates $(i,j)$ takes the index $k=(i-1)s+j$, where
$k=1,\ldots,p$, and $p$ is the number of pixels in the image.
We will set either $p=(r+2)(s+2)$ or $p=rs$ depending on whether the boundary
pixels are considered in the discretization or not.
For each point in the discretization of $\Omega$, we write indifferently
$$
\begin{aligned}
u(x_i,y_j) &= u_{i,j} = u_k, \\
u_x(x_i,y_j) &= (u_x)_{i,j} = (u_x)_k, \\
u_y(x_i,y_j) &= (u_y)_{i,j} = (u_y)_k, \\
\bm{n}(x_i,y_j) &= \bm{n}_{i,j} = \bm{n}_k,
\end{aligned}
$$
using the two-index notation to refer to the values on the grid, and the
one-index notation to identify the values after the vectorization.

We assume that $q$ pictures are available, each one with light source at infinite
distance from the origin along the direction 
$$
\bell_t = \begin{pmatrix} \ell_{1t} \\ \ell_{2t} \\ \ell_{3t} \end{pmatrix},
\quad t=1,\ldots,q.
$$
Each vector $\bell_t$ stems from the object to the light source and its
Euclidean norm is proportional to the light intensity. 
This introduces an undetermined proportionality constant in the problem.
The image vectors are denoted by $\bm{m}_1,\bm{m}_2,\ldots,\bm{m}_q\in\R^p$.
The aim is to reconstruct the 3D shape of the object.

\section{Photometric stereo with known lighting}\label{sec:known}

If the surface of the object is \emph{Lambertian} \cite{klette1998}, then the
light intensity observed at each point is proportional to the angle between the
normal to the observed surface at that point and the light direction.
This model is usually referred to as Lambert's cosine law.
It can be stated as
\begin{equation}\label{lambertlaw}
\rho(x,y) \, \langle \bm{n}(x,y), \bell_t \rangle = \cI_t(x,y), \quad
t=1,\ldots,q
\end{equation}
where the scalar function $\rho(x,y)$ represents the \emph{albedo} at each 
surface point and it  keeps into account the partial light absorption of
that portion of the surface.
The light intensity at each point of the $t$th image is denoted by
$\cI_t(x,y)$, and $\langle\cdot,\cdot\rangle$ is the usual inner
product in $\R^3$.
When the albedo is constant, the object is said to be a \emph{Lambertian
reflector}.

Summing up, the classical assumptions for this model are the following:
\begin{itemize}
\item the surface is Lambertian;
\item the light sources are placed at infinite distance;
\item no portion of the surface is shaded in all the pictures;
\item the camera is sufficiently far from the object so that perspective
deformations can be neglected.
\end{itemize} 
In this section, we assume that the light directions $\bell_t$,
$t=1,\ldots,q$, are known.

The continuous formulation \eqref{lambertlaw} of Lambert's law leads to a
Hamilton--Jacobi differential model; see \cite{mecca2011,mecca2013} for a
thorough study. Here we briefly review its construction.

Setting
$$
\bell_t = \begin{pmatrix} \widetilde{\bell}_t \\ \ell_{3t} \end{pmatrix}, \quad
\widetilde{\bell}_t\in\R^2,
$$
equation \eqref{lambertlaw} becomes
$$
\rho(x,y) \, \frac{\langle-\nabla{u(x,y)},\widetilde{\bell}_t\rangle+\ell_{3t}}
	{\sqrt{1+\|\nabla{u(x,y)}\|^2}} = \cI_t(x,y), \quad t=1,\ldots,q,
$$
or, equivalently,
$$
\cI_t(x,y) \sqrt{1+\|\nabla{u(x,y)}\|^2} +
\rho(x,y) \bigl( \langle \nabla{u(x,y)},\widetilde{\bell}_t\rangle-\ell_{3t}
\bigr) = 0.
$$
By imposing Dirichlet boundary conditions, one obtains the system of $q$ first
order nonlinear PDEs of Hamilton--Jacobi type
$$
\begin{cases}
H_t(x,y,\nabla{u(x,y)})=0, \quad & t=1,\ldots,q, \\
u(x,y)=g(x,y), & (x,y) \in \partial\Omega.
\end{cases}
$$

Following \cite{mecca2013}, we obtain for $t=1$
$$
\sqrt{1+\|\nabla{u(x,y)}\|^2} = \rho(x,y) 
\frac{\langle-\nabla{u(x,y)},\widetilde{\bell}_1\rangle+\ell_{31}}{\cI_1(x,y)}
$$
and we substitute this expression in the equations corresponding to
$t=2,\ldots,q$, to obtain
\begin{equation*}
\left(\langle-\nabla{u(x,y)},\widetilde{\bell}_1\rangle-\ell_{31}\right) 
\cI_t(x,y)
= \left(\langle-\nabla{u(x,y)},\widetilde{\bell}_t\rangle-\ell_{3t}\right) 
\cI_1(x,y).
\end{equation*}

This shows that the minimal number of images for the problem to be well-posed
is 2. 
Nevertheless, if $q=2$ the solution may not exist for particular light
orientations.
Considering $q>2$ leads to a least-squares approach, that may be effective 
to reduce the influence of noise in experimental data sets, without making data
acquisition significantly harder.
In any case, knowing accurately the lights position $\bell_t$ is a strong
requirement.

After $u(x,y)$ is computed, the albedo is given by
$$
\rho(x,y) = \frac{\cI_t(x,y)}{\langle \bm{n}(x,y),\bell_t \rangle}, 
\quad \text{for any $t=1,\ldots,q$}.
$$
Conditions for the existence of solutions are discussed in \cite{kozera1991},
and in \cite{mecca2013} the problem is studied under more realistic
assumptions; see also \cite{stocchino15}.

%

The main disadvantage of the Hamilton--Jacobi model is that the operator to be
inverted depends upon the data. The matrix of the linear system obtained
through the discretization may be singular or severely ill-conditioned in
certain lighting conditions. However, this problem can be tackled by suitably
choosing the position of the light sources.

An alternative approach for the 3D reconstruction of a Lambertian surface
is based on splitting the computation into two subproblems, one concerning the
computation of the normal vectors and another in the reconstruction of the
surface by the solution of a Poisson equation.

The first step consists of immediately discretizing Lambert's law on a
regular grid, and determining the normal vector field to the surface by solving
a matrix equation. This operation is mostly influenced by the lights position,
Then, the divergence of the normal field is numerically approximated, to obtain
a discretization of the Laplace operator, and the 3D profile of the observed
object is recovered by solving a Poisson partial differential equation.
This part is uniquely concerned with the camera position.

The advantages of this approach are that the computation is decoupled
into simpler problems, and that it allows for the solution of the unknown
lighting case, as it will be shown in the next section.
A drawback is that this procedure requires a larger number of images than the
Hamilton--Jacobi formulation, i.e., at least 3.
This is not a substantial problem in applications, since usually dozens of
images can be easily made available.

Let us apply discretization \eqref{mesh} to equation \eqref{lambertlaw}.
We ignore for the moment the boundary pixels, where we will impose suitable
boundary conditions, and rearrange the internal pixels by the lexicographical
ordering.
Denoting by $\rho_k$ and $\bm{n}_k$, respectively, the value of the albedo and
the (normalized) normal vector to the surface at the $k$th pixel, then the
following relation holds at each point of each picture
\begin{equation}
\rho_k \bm{n}_k^T \bell_t = m_{kt}, \quad k=1,\ldots,p,\quad t=1,\ldots,q.
\label{eq:lambert}
\end{equation}
The scalars $m_{kt}$ represent the radiation $\cI_t(x,y)$ reflected by the small
area near the $k$th pixel when illuminated from the direction $\bell_t$, that
is, the components of the vectors $\bm{m}_1,\bm{m}_2,\ldots,\bm{m}_q\in\R^p$,
containing the vectorized images.

By defining the matrices
\[
\begin{aligned}
D &= \diag(\rho_1,\ldots,\rho_p)\in\R^{p\times p}, \\
L &= [\bell_1,\ldots,\bell_q]\in\R^{3\times q}, \\
N &= [\bm{n}_1,\ldots,\bm{n}_p]\in\R^{3\times p}, \\
M &= [\bm{m}_1,\ldots,\bm{m}_q]\in\R^{p\times q},
\end{aligned}
\]
the equations~\eqref{eq:lambert} can be grouped into the matrix equation
\begin{equation}
D N^T L = M.
\label{eq:fact}
\end{equation}
When the lights positions are known, we first compute
\begin{equation}
\widetilde{N}^T = M L^\dagger,
\label{eq:lsprob}
\end{equation}
where $L^\dagger$ is the Moore-Penrose pseudoinverse of $L$ \cite{bjo96}.
Then, the matrices $D$ and $N$, defining the albedo and the normal vectors, can
be computed from the factorization $ND=\widetilde{N}$ by simply normalizing
the columns of $\widetilde{N}$.

For the solution of \eqref{eq:lsprob} to be unique, it is necessary that 
$q\geq 3$, from which we see that the minimum number of images required to
obtain the normal field by this approach is 3.

Once the field of the normal vectors to the surface is obtained, we consider
the vectors
\[
((u_x)_k,(u_y)_k,-1)^T = -\frac{\bm{n}_k}{(\bm{n}_k)_3},
\]
obtained by normalizing to -1 the third component of the normals $\bm{n}_k$;
see \eqref{gradnorm}.
We numerically differentiate the first two components of the above vectors to
obtain an approximation on the grid \eqref{mesh} of the Laplacian
$f(x,y)=u_{xx}+u_{yy}$.
To do that, we employ the following formula, based on the second order centered
finite differences approximation for the first derivative
\begin{equation}\label{lapapprox}
f_{i,j} = f(x_i,y_j) \approx \frac{(u_x)_{i+1,j}-(u_x)_{i-1,j}}{2h} + 
\frac{(u_y)_{i,j+1}-(u_y)_{i,j-1}}{2h}.
\end{equation}

Then, the 3D profile of the object, represented by the explicit function
$z=u(x,y)$, can be recovered by solving the Poisson partial differential
equation
\begin{equation}
\Delta u(x,y) = f(x,y),
\label{poisson}
\end{equation}
where $\Delta$ denotes the Laplace operator, and suitable boundary condition
must be imposed to ensure unicity of solution.

We discretize the Poisson equation by a second order finite differences scheme
and initially consider Dirichlet boundary conditions.
Consider the equation~\eqref{poisson} on the rectangle
$[-A/2,A/2]\times[-B/2,B/2]$, with boundary conditions
$$
\begin{aligned}
u(x,-B/2) & =  \phi_1(x),\quad u(x,B/2) = \phi_2(x), \quad x\in[-A/2, A/2], \\
u(-A/2,y) & =  \psi_1(y),\quad u(A/2,y) = \psi_2(y), \quad y\in[-B/2, B/2].
\end{aligned}
$$
Let $u(x_i, y_j)= u_{i,j}$ and $f(x_i, y_j)= f_{i,j}$ at each point of the
mesh~\eqref{mesh}. Then, the boundary values are denoted by
$u_{i,0}=\phi_1(x_i)$, $u_{i,s+1}=\phi_2(x_i)$, $u_{0,j}=\psi_1(y_j)$, and
$u_{r+1,j}=\psi_2(y_j)$.

Discretizing the Poisson equation by the well-known five point scheme with
stepsize $h$, we obtain the linear system
\begin{equation}
u_{i-1,j}+u_{i,j-1}-4u_{i,j}+u_{i,j+1}+u_{i+1,j} = \tilde{f}_{i,j},
\label{discrPois}
\end{equation}
for $i=1,\dots,r$ and $j=1,\dots,s$, with $\tilde{f}_{i,j}=h^2f_{i,j}$.
We remark that approximating $f_{i,j}$ by \eqref{lapapprox} does not
deteriorate the quality of the results, as both \eqref{lapapprox} and the five
point scheme produce an approximation of order $O(h^2)$.
This assertion has been verified numerically.

By aggregating the mesh points $u_{i,j}$ by columns,
we obtain the following pentadiagonal system of size $p=rs$
\begin{equation}\label{pentdiagsyst}
\begin{cases}
\begin{aligned}
&T\bm{u}_1+I_s\bm{u}_2 &&= \bm{b}_1, \\
&I_s\bm{u}_{i-1}+T \bm{u}_i +I_s\bm{u}_{i+1} &&= \bm{b}_i,
	\quad && i = 2,\dots,r-1, \\
&I_s\bm{u}_{r-1}+T \bm{u}_{r}&&= \bm{b}_{r},
\end{aligned}
\end{cases}
\end{equation}
where $I_s$ denotes the identity matrix of size $s$,
$$
T = \begin{bmatrix}
-4 & 1 & & \\
1 & -4 & \ddots & \\
 & \ddots & \ddots & 1\\
 & & 1 & -4
\end{bmatrix}\in \R^{s\times s},\quad 
\bm{u}_i = \begin{bmatrix}
u_{i,1}\\
u_{i,2}\\
\vdots \\
u_{i,s}
\end{bmatrix}\in \R^{s}, \quad i=1,\ldots,r.
$$
The right-hand side vectors are
$$
\begin{aligned}
\bm{b}_1 &= \begin{bmatrix}
\tilde f_{1,1}\\
\tilde f_{1,2}\\
\vdots \\
\tilde f_{1,s}\\
\end{bmatrix}
-\begin{bmatrix}
u_{0,1}+u_{1,0} \\
u_{0,2} \\
\vdots\\
u_{0,s}+u_{1,s+1} 
\end{bmatrix}, \\
\bm{b}_r &= \begin{bmatrix}
\tilde f_{r,1} \\
\tilde f_{r,2} \\
\vdots \\
\tilde f_{r,s} \\
\end{bmatrix}
-\begin{bmatrix}
u_{r+1,1}+u_{r,0} \\
u_{r+1,2} \\
\vdots\\
u_{r+1,s}+u_{r,s+1} 
\end{bmatrix},
\end{aligned}
$$
and
$$
\bm{b}_i = \begin{bmatrix}
\tilde{f}_{i,1}\\
\tilde{f}_{i,2}\\
\vdots \\
\vdots \\
\tilde{f}_{i,s}\\
\end{bmatrix}-\begin{bmatrix}
u_{i,0} \\
0 \\
\vdots \\
0 \\
u_{i,s+1} \\
\end{bmatrix}\in \R^{s}, \quad i=2,\ldots,r-1.
$$

The system has the condensed representation $A\bm{u} = \bm{b}$, where 
$$\bm{u} = \begin{bmatrix}
\bm{u}_{1}\\
\bm{u}_{2}\\
\vdots\\
\vdots\\
\bm{u}_{r}\\
\end{bmatrix}\in \R^{p}, \quad \bm{b} = \begin{bmatrix}
\bm{b}_{1}\\
\bm{b}_{2}\\
\vdots\\
\bm{b}_{r-1}\\
\bm{b}_{r}\\
\end{bmatrix}\in \R^{p},
$$
and
$$
A = \begin{bmatrix}
T & I_{s} & & \\
I_{s} & T & \ddots & \\
 & \ddots & \ddots & I_{s}\\
 & & I_{s} & T 
\end{bmatrix}\in \R^{p\times p}.
$$
This square linear system of size $p$ (number of pixels) can be
solved by any general direct or preconditioned iterative method suited for
large sparse matrices \cite{gvl96}, or, specifically, by a fast Poisson solver
\cite{golub1970,chan1987}.

In practical photometric stereo, one usually focuses on the case of homogeneous
Dirichlet boundary conditions, i.e.,
$\phi_1(x)=\phi_2(x)=\psi_1(y)=\psi_2(y)=0$, which corresponds to assuming that
the observed object stands on a flat background.

When the value of the function $u(x,y)$ on the boundary is unknown, there may
be information on the normal derivatives of the solution.
This amounts to imposing the following Neumann boundary conditions on the
horizontal and vertical boundaries of the domain 
\begin{align}
-\frac{\partial u(x,-B/2)}{\partial y} &= \mu_1(x), & 
\frac{\partial u(x,B/2)}{\partial y} &= \mu_2(x),
\label{neum1} \\
-\frac{\partial u(-A/2,y)}{\partial x} &= \nu_1(y), &  
\frac{\partial u(A/2,y)}{\partial x} &= \nu_2(y).
\label{neum2}
\end{align}
Neumann boundary conditions are not sufficient to make the problem well posed.
In fact, the solution is determined up to an additive constant, so the slope of
a point of the solution has to be fixed arbitrarily.

In this case, the solution at the boundary is to be determined too, and the
number of unknowns $u_{i,j}$ increases from $rs$ to $p=(r+2)(s+2)$.
Equation \eqref{discrPois} is still valid for all the internal points of the
grid, that is, for $i=1,\dots,r$ and $j=1,\ldots,s$, but 
it has to be coupled to the discretization of the conditions \eqref{neum1} and
\eqref{neum2}.
To do this, we employed a one-sided second order discretization, obtaining
on the horizontal boundaries of the domain
$$
\begin{aligned}
3u_{i,0}-4u_{i,1}+u_{i,2} = \tilde{\mu}_{1,i}, \\
u_{i,s-1}-4u_{i,s}+3u_{i,s+1} = \tilde{\mu}_{2,i},
\end{aligned}
$$
with $\tilde{\mu}_{1,i}=2h\mu_1(x_i)$ and $\tilde{\mu}_{2,i}=2h\mu_2(x_i)$, for
$i=1,\dots,r$.
Similarly, on the vertical boundaries, we get
$$
\begin{aligned}
3u_{0,j}-4u_{1,j}+u_{2,j} = \tilde{\nu}_{1,j}, \\
u_{r-1,j}-4u_{r,j}+3u_{r+1,j} = \tilde{\nu}_{2,j},
\end{aligned}
$$
with $\tilde{\nu}_{1,j} = 2h\nu_1(y_j)$ and $\tilde{\nu}_{2,j}=2h\nu_2(y_j)$,
for $j=1,\dots,s$.
On the four corner points, we perform a linear interpolation from the three
neighbour nodes of the grid.
The equation for the corner with coordinates $(x_0,y_0)$ is
$$
u_{0,0} - u_{0,1} - u_{1,0} + u_{1,1} = 0.
$$
Similar equations correspond to the other three corners of the rectangular
domain.

The resulting linear system
\begin{equation}\label{linsys}
B\bm{u} = \bm{c}
\end{equation}
is defined by
\[
B = \begin{bmatrix}
P & Q & S \\
S & R & S \\
& \ddots & \ddots & \ddots \\
& & S & R & S \\
& & S & Q & P
\end{bmatrix}, \quad
\bm{u} = \begin{bmatrix}
\bm{u}_0 \\ \bm{u}_1 \\ \vdots \\ \bm{u}_r \\ \bm{u}_{r+1}
\end{bmatrix}, \quad
\bm{c} = \begin{bmatrix}
\bm{c}_0 \\ \bm{c}_1 \\ \vdots \\ \bm{c}_r \\ \bm{c}_{r+1}
\end{bmatrix},
\]
with
\[
\begin{aligned}
P &= \begin{bmatrix}
1 & -1 \\
& 3 \\
& & \ddots \\
& & & 3 \\
& & & -1 & 1
\end{bmatrix}, \\
Q &= \begin{bmatrix}
-1 & 1 \\
& -4 \\
& & \ddots \\
& & & -4 \\
& & & 1 & -1
\end{bmatrix}, \\
R &= \begin{bmatrix}
3 & -4 & 1 \\
1 & -4 & 1 & \\
& \ddots & \ddots & \ddots & \\
& & 1 & -4 & 1 & \\
& & 1 & -4 & 3
\end{bmatrix},
\end{aligned}
\]
and $S=\diag(0,1,\ldots,1,0)$.

The expression of the right-hand side is the following:
\[
\bm{c}_0 = \begin{bmatrix}
0 \\ \tilde{\nu}_{1,1} \\ \vdots \\ \tilde{\nu}_{1,s} \\ 0
\end{bmatrix}, \quad
\bm{c}_{r+1} = \begin{bmatrix}
0 \\ \tilde{\nu}_{2,1} \\ \vdots \\ \tilde{\nu}_{2,s} \\ 0
\end{bmatrix}, \quad
\bm{c}_i = \begin{bmatrix}
\tilde{\mu}_{1,i} \\ \tilde{f}_{i,1} \\ \vdots \\ \tilde{f}_{i,s} \\ 
\tilde{\mu}_{2,i}
\end{bmatrix}, \quad
i=1,\ldots,r.
\]
\begin{remark}\label{rem:gamma}\rm
To make both the matrix $B$ nonsingular and the solution unique, we substitute
in \eqref{linsys} the equation associated to a chosen internal point of the
domain, say, $(x_i,y_j)$, with the equation $u_{i,j}=\gamma$, where $\gamma$ is
the slope assigned to that point.
\end{remark}

\section{Photometric stereo under unknown lighting}\label{sec:unknown}

The need for an accurate localization of the light sources position 
is a strong limitation for the practical application of photometric stereo.
For this reason, the automatic determination of the light directions has been
of interest to many researchers.

Referring to a \textit{4-sources photometric stereo}, some papers conjecture
that the problem with unknown lighting can be uniquely solved using only 4 images.
In \cite{chen2006} the authors suggest an approach based on  the use of
low-order spherical harmonics for Lambertian objects, while \cite{basri2007}
proposes a method based on the decomposition of the light intensity into a 
linear combination of spherical harmonics.

The problem was actually solved by Hayakawa in \cite{hayakawa1994}.
We briefly review here his results, as they contain the main steps of the
numerical procedure.
The photometric stereo technique under unknown lighting consists of
computing the rank-3 factorization 
\begin{equation}
\widetilde{N}^T L = M,
\label{eq:factilde}
\end{equation}
where $\widetilde{N}=ND$ (see \eqref{eq:fact}), without knowing in advance the
lights location, i.e., the matrix $L$.
This problems has not a unique solution.
Nevertheless, there are some physical constraints which allow one to find a
meaningful solution.

\begin{lemma}\label{lemmaQ}
The matrices $D$, $N$, and $L$, containing the albedo, the normals to the
observed surface, and the lights directions, are determined up to a unitary
transformation, that is, ~\eqref{eq:factilde} is satisfied by the matrix pair
$(Q\widetilde{N},QL)$, for any orthogonal matrix $Q\in\R^{3\times 3}$.
\end{lemma}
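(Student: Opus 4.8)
The plan is to prove the lemma by direct verification: I would substitute the transformed pair $(Q\widetilde{N}, QL)$ into the right-hand side of the factorization \eqref{eq:factilde} and check that the data matrix $M$ is reproduced. Since $Q\in\R^{3\times3}$ is orthogonal it obeys $Q^TQ=I$, and this single identity carries essentially the entire argument.

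First I would form the product prescribed by \eqref{eq:factilde} for the transformed pair and simplify:
$$
(Q\widetilde{N})^T(QL) = \widetilde{N}^T Q^T Q\, L = \widetilde{N}^T L = M,
$$
where the middle equality uses $Q^TQ=I$ and the last one is the hypothesis that $(\widetilde{N},L)$ already factorizes $M$. This establishes exactly the displayed claim in the statement, namely that $(Q\widetilde{N},QL)$ is again a solution.

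The one point deserving a little care, and what I regard as the conceptual content of the lemma, is confirming that the transformation respects the \emph{structure} of the factors, so that the ambiguity genuinely acts on $D$, $N$, and $L$ individually and not merely on the product $\widetilde{N}$. Writing $\widetilde{N}=ND$ with $N=[\bm{n}_1,\ldots,\bm{n}_p]$ and $D$ the diagonal albedo matrix, I would note that $Q\widetilde{N}=(QN)D$, so the transformed normals are the columns $Q\bm{n}_k$ while $D$ is left unchanged. Because $Q$ is orthogonal it preserves Euclidean norms, $\|Q\bm{n}_k\|=\|\bm{n}_k\|=1$, so the columns of $QN$ are still unit vectors and $(QN,D)$ is a legitimate normals--albedo decomposition. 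Hence the recovered geometry is fixed precisely up to a common rotation or reflection $Q$ of the normal field and the light directions, with the albedo invariant.

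There is no real obstacle here: once orthogonality is invoked the algebra is immediate, and the only genuine observation is that $Q$ simultaneously preserves the factorization and the unit-norm normalization. If one wished to strengthen the result to assert that such transformations are the \emph{only} admissible ones, one would additionally impose the physical normalization (unit normals, nonnegative albedo) and argue that any competing factor reproducing $M$ must differ from $(\widetilde{N},L)$ by an orthogonal $Q$; but the statement as phrased asks only for the forward implication, which the computation above settles directly.
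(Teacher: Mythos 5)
Your verification is correct: $(Q\widetilde{N})^T(QL)=\widetilde{N}^TQ^TQL=\widetilde{N}^TL=M$ settles the displayed claim, and your observation that $Q\widetilde{N}=(QN)D$ with $\|Q\bm{n}_k\|=\|\bm{n}_k\|=1$ confirms that the transformed pair still has the physical structure of unit normals and unchanged albedo. However, the paper's proof runs in the opposite direction, and the difference matters. The paper starts from the \emph{general} ambiguity: any nonsingular $A\in\R^{3\times 3}$ yields a valid pair $(A^{-T}\widetilde{N},AL)$, since $(A^{-T}\widetilde{N})^T(AL)=\widetilde{N}^TA^{-1}AL=M$; it then invokes the physical normalization (the column norms of $\widetilde{N}$ are the albedos $\rho_k$, and $\|\bell_t\|$ is proportional to the light intensity) to argue that the only transformations consistent with these constraints are orthogonal ones, for which $A^{-T}=A=Q$ and the pair reduces to $(Q\widetilde{N},QL)$. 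In other words, the paper's content is the \emph{converse} half --- the ambiguity is \emph{no larger} than the orthogonal group --- which is precisely what the words ``determined up to a unitary transformation'' assert, and which is what licenses the algorithm of Section~4 (recovering $B$ from $\|B\bm{z}_t\|=1$ and accepting that only the triangular factor $R$ of $B=QR$ is recoverable). You prove rigorously the inclusion the paper treats as immediate, and you explicitly set aside the uniqueness direction as beyond the statement ``as phrased.'' That reading is defensible for the formal clause after ``that is,'' but it leaves the lemma's leading claim unproved; to match the paper you would add your own suggested strengthening: any competing full-rank factorization of $M$ must be of the form $(A^{-T}\widetilde{N},AL)$ for nonsingular $A$, and preservation of the column norms of $\widetilde{N}$ and of the $\|\bell_t\|$ forces $A^TA=I$.
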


\begin{proof}
Any matrix pair $(A^{-T}\widetilde{N},AL)$, with $A\in\R^{3\times 3}$
nonsingular, satisfies~\eqref{eq:factilde}.
Since the normal vectors $\bm{n}_k$ are normalized, the norm of the $k$th
column of $\widetilde{N}$ equals the albedo $\rho_k$, while $\|\bell_t\|$ is
proportional to the light intensity.
This implies that the transformation matrix $A$ has to be orthogonal.
\end{proof}

The above Lemma suggests that the original orientation of the observed object
cannot be determined without further a priori information.
This fact, known as \emph{bas-relief ambiguity} \cite{basrelief99}, should be
expected, since only the relative position between the object and the camera
can be deduced from a set of images.
This indetermination imposes some care in the shape reconstruction, because
there is the possibility of axes reflections in the computation of the
solution, which would alter significantly the shape of the reconstructed object.
We will treat this particular problem in the next section.

In what follows, it is not restrictive to assume $\|\bell_t\|=1$,
$t=1,\ldots,q$.
Indeed, we already noticed that there is an undetermined proportionality
constant in the problem, depending upon the unit of measure adopted for light
intensity, and in the typical experimental setting the pictures are taken using
a flashlight at a fixed distance from the object, which produces a constant
light intensity across the observations.
In particular situations the light intensity may vary, for example when the
size of the observed object requires the use of the sun as a light source,
taking pictures at different times of the day. In this case a light meter can
be used to obtain an estimate of $\|\bell_t\|$.

Let the ``compact'' singular value decomposition (SVD) \cite{gvl96} of the
observations matrix be
\begin{equation}\label{svd}
M = U\Sigma V^T,
\end{equation}
where $\Sigma=\diag(\sigma_1,\ldots,\sigma_q)$ is the diagonal matrix
containing the singular values and $U\in\R^{p\times q}$,
$V\in\R^{q\times q}$ are matrices whose orthonormal columns $\bm{u}_i$ and
$\bm{v}_i$ are the left and right singular vectors, respectively.
In our application $q\ll p$, since the number of pixels in an image is usually
very large, while we would like to obtain a reconstruction using a set of
observations as small as possible.
As we observed in the previous section, it is only required that $q\geq 3$.

When $q$ is small, the SVD factorization can be computed efficiently
by standard numerical libraries; we used the \texttt{svd} function of Matlab
\cite{matlab} even for a quite large value of $p$.
In particular situations, in order to reduce the computation time, one may
compute a partial singular value decomposition by an iterative method; see,
e.g., \cite{BR05,BR13}.

Since the images may be acquired in non-ideal conditions and may be affected by
noise, factorization \eqref{svd} usually has numerical rank $r>3$.
Then, a \emph{truncated SVD} must be performed by setting
$\sigma_4=\cdots=\sigma_q=0$.
We let $W=[\sigma_1\bm{u}_1,\sigma_2\bm{u}_2,\sigma_3\bm{u}_3]^T$
and $Z=[\bm{v}_1,\bm{v}_2,\bm{v}_3]^T$, so that $W^TZ\simeq M$.
This choice produces the best rank-3 approximation to the data matrix $M$ with
respect to both the Euclidean and the Frobenius norm \cite{bjo96}.
The constructive proof of the following theorem from \cite{hayakawa1994} shows
how to obtain the sought matrices $\widetilde{N}$ and $L$ from this initial
factorization.

\begin{theorem}
The normal vectors to the observed surface and the lights position can be
uniquely determined from~\eqref{eq:fact}, up to a unitary transformation, only
if at least 6 images taken in different lighting conditions are available.
\end{theorem}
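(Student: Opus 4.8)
The plan is to start from the best rank-3 factorization $W^T Z = M$ delivered by the truncated SVD and to quantify exactly how much freedom remains. Since $\widetilde{N}^T L = M = W^T Z$ and all factors have rank $3$, the true factors must be related to $W$ and $Z$ through a single nonsingular matrix $A\in\R^{3\times 3}$, namely $\widetilde{N} = A W$ and $L = A^{-T} Z$; one checks directly that $(A W)^T (A^{-T} Z) = W^T Z = M$. Thus the reconstruction problem is reduced to pinning down the nine entries of $A$, and by Lemma~\ref{lemmaQ} we can only hope to determine $A$ up to a left orthogonal factor.

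Next I would bring in the only physical constraint still available after the SVD, the normalization $\|\bell_t\|=1$ adopted above. Writing $z_t$ for the $t$th column of $Z$, the $t$th light direction is $\bell_t = A^{-T} z_t$, so the constraint $\|\bell_t\|^2=1$ reads $z_t^T (A^T A)^{-1} z_t = 1$. Introducing the symmetric positive definite matrix $S = (A^T A)^{-1}$, these become the system of linear equations $z_t^T S z_t = 1$, for $t=1,\ldots,q$, in the entries of $S$; equivalently, using the Frobenius inner product, $\langle z_t z_t^T, S\rangle = 1$.

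The counting argument then yields the theorem. A symmetric $3\times 3$ matrix has exactly $6$ independent entries, so this system can determine $S$ uniquely only when at least $6$ equations are imposed, that is, when $q\ge 6$; with fewer images $S$ ranges over a positive-dimensional affine family and the reconstruction is genuinely non-unique, which establishes the necessity of six images asserted by the ``only if''. Conversely, once $S$ is found I would recover $A$ by factoring $S^{-1} = A^T A$ via a Cholesky or symmetric square-root factorization. This determines $A$ only up to a left orthogonal factor $Q$, and substituting back shows that the resulting pair is exactly $(Q\widetilde{N}_0, Q L_0)$ with $\widetilde{N}_0 = C W$ and $L_0 = C^{-1} Z$, where $C=(S^{-1})^{1/2}$; this is precisely the ambiguity predicted by Lemma~\ref{lemmaQ}, so the normals and the lights are recovered up to the unavoidable unitary transformation.

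The step I expect to be delicate is not the algebra but guaranteeing that the six equations are genuinely independent: the functionals $S \mapsto \langle z_t z_t^T, S\rangle$ must span the $6$-dimensional space of symmetric matrices, which fails for degenerate configurations such as coplanar light directions. This is exactly where the ``suitable conditions'' on the lighting enter, and I would make them explicit by requiring the matrices $z_t z_t^T$ to be linearly independent as symmetric matrices. A secondary point to verify is that the $S$ solving the linear system is positive definite, so that $S^{-1}=A^T A$ admits a real solution; for exact, noise-free data this is automatic, but it should be acknowledged in the truncated SVD setting where noise perturbs the constraints.
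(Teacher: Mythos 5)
Your proposal is correct and follows essentially the same route as the paper: your matrix $S=(A^TA)^{-1}$ is exactly the paper's $G=B^TB$ in \eqref{eq:diag}, and both arguments turn the normalization $\|\bell_t\|=1$ into a linear system for the six independent entries of a symmetric positive definite $3\times 3$ matrix, conclude $q\geq 6$ by counting, and recover the factor via a Cholesky-type factorization up to the orthogonal ambiguity of Lemma~\ref{lemmaQ}. Your closing caveats (possible degeneracy of the equations and positive definiteness under noise) match the paper's own remarks that $H$ may be rank-deficient even for $q\geq 6$ and that certain light placements are inadmissible.
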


\begin{proof}
Let us consider the initial rank-3 factorization $W^TZ=M$ described above, with
$W=[\bm{w}_1,\ldots,\bm{w}_p]$ and $Z=[\bm{z}_1,\ldots,\bm{z}_q]$.
Given the assumption on the norms of the vectors $\bell_t$,
we first determine a matrix $B$ such that $\|B\bm{z}_t\|=1$ for each
$t=1,\ldots,q$. This implies solving the system of equations
\begin{equation}
\diag(Z^T G Z) = \bm{1},
\label{eq:diag}
\end{equation}
where $\bm{1}=(1,\ldots,1)^T\in\R^q$ and $G=B^TB$ is a symmetric positive
definite $3\times 3$ matrix.
The matrix $G$ depends upon 6 independent parameters, say, its elements
$g_{ij}$ with $i\leq j$. 
As each equation in~\eqref{eq:diag} is of the type
\[
\bm{z}_t^T G \bm{z}_t = \sum_{i,j=1}^3 z_{it} z_{jt} g_{ij} = 1,
\]
the system~\eqref{eq:diag} can be rewritten in the form
\[
H\bm{g}=\bm{1},
\]
where $\bm{g}=(g_{11},g_{22},g_{33},g_{12},g_{13},g_{23})^T$ and $H$ is a
$q\times 6$ matrix, whose rows are
\[
\begin{bmatrix}
z_{1t}^2 & z_{2t}^2 & z_{3t}^2 & 2z_{1t}z_{2t} & 2z_{1t}z_{3t} & 2z_{2t}z_{3t}
\end{bmatrix},
\quad t=1,\ldots,q.
\]
A necessary condition for the solution vector $\bm{g}$ to be unique is that
$q\geq 6$.
This completes the proof.
\end{proof}
\smallskip

\begin{remark}\label{rem:lights}\rm
The above theorem shows that at least 6 images are needed to reconstruct a
shape by photometric stereo under unknown lighting.
This is only a necessary condition for the unique solvability of the problem,
as $H$ may be rank-deficient even for $q\geq 6$.
In fact, the requirement on the rank of $H$ poses some constraints on the
lights disposition.
For example, a very common experimental approach is to place the light sources
roughly on a circle around the camera, i.e., at a fixed distance $\delta$ from
the origin, on a plane parallel to the observation plane.
This is equivalent to fixing angles $\theta_1,\ldots,\theta_q\in[0,2\pi)$ and
setting
\[
\bell_t = \frac{(\cos\theta_t, \sin\theta_t, \delta)^T}{\sqrt{1+\delta^2}},
\quad t=1,\ldots,q.
\]
This lights placement is not acceptable, because in this case the third
column of the matrix $H$ would be a linear combination of the first two.
So at least one light source should violate this scheme.
Placing the light sources at random positions around the object is often
a safe and easy way to ensure that $H$ is full-rank.
\end{remark}
\smallskip

As the sought matrix $B$ is determined up to a unitary transformation, we
represent it by its QR factorization $B=QR$. 
The ``essential'' factor $R$ can be obtained by the Cholesky factorization 
$G=R^TR$ \cite{gvl96}, while $Q$ cannot be uniquely determined; see
Lemma~\ref{lemmaQ}.
We will discuss a reasonable choice for the matrix $Q$ in the following
section.

Once $Q$ is determined, problem \eqref{eq:factilde} is solved by setting
\begin{equation}\label{lightsrot}
\widetilde{N}=QR^{-T}W \quad \text{and} \quad L=QRZ.
\end{equation}
By normalizing the columns of $\widetilde{N}$ one obtains the diagonal albedo
matrix $D$ and the matrix $N$, whose columns are the normal vectors, such that
$ND=\widetilde{N}$.
The integration of the normal vector field is then performed by one of the
approaches described in Section~\ref{sec:known}.

\section{Determining the right surface orientation}\label{sec:orient}

As we already observed, the matrices $\widetilde{N}$ and $L$ can be determined
only up to a unitary transformation $Q$.
It is nevertheless important to suitably choose $Q$, at least for two reasons.

First of all, the indetermination in the factorization \eqref{eq:factilde} may
introduce axes reflections in the reference system centered at the object, with
the result of capsizing the direction of a part of the normal vectors.

Secondly, the integration procedure in Section~\ref{sec:known} assumes that
the function describing the shape of the object is single-valued and explicit,
that is, has the form $z=u(x,y)$.
The matrix $Q$ should introduce a rotation of the reference system which meets
this assumption.

As observed in \cite{basrelief99}, it is impossible to obtain such information
from the data, without additional a priori information.
In our case, this information comes from the knowledge of the light directions
and from a a good practice in taking the pictures.
The shooting procedure we propose, consists of taking the pictures in a
particular order.
We assume, conventionally, that for the first picture the light is placed at
the right hand of the camera, but this is not restrictive.
What is important, is that the light source is moved counterclockwise around
the object, in the half space containing the camera, and that the pictures are
ordered according to this sequence.
There is no need that the light sources are regularly spaced; on this regard,
see Remark~\ref{rem:lights}.

After determining the matrices $\widehat{N}=R^{-T}W$ and $\widehat{L}=R^{-T}Z$
by the procedure described in Section~\ref{sec:unknown}, we consider the three
columns $\hat{\bell}_t$ of $\widehat{L}$, with
$t=1,\lfloor\frac{q}{3}\rfloor,\lfloor\frac{2q}{3}\rfloor$, where $\lfloor
x\rfloor$ denotes the integer part of $x$.
Given the above shooting procedure, this vector triplet must have a
right-handed (or positive) orientation. This can be checked by the sign of the
determinant of the matrix formed by the vector triplet. If the determinant is
negative, then the factorization procedure introduced an axis reflection. To
restore the original orientation we change the sign of the third row of
$\widehat{N}$ and $\widehat{L}$, which corresponds to inverting the direction
of the $z$ axis.

Once the normal field is well-oriented, we turn to determining a rotation
matrix $Q$ that restores the original orientation, i.e., with the camera
aligned on the $z$ axis.
To approximately identify the direction of the camera as seen from the observed
object, given the proposed shooting procedure, we set
\[
\bm{v}_3 = \sum_{t=1}^q \widehat{\bell}_t.
\]
and assume the vector $\bm{v}_3$ to be the direction of the $z$ axis.
The direction $\bm{v}_1$ of the $x$ axis is arbitrarily obtained by projecting
$\widehat{\bell}_1$ on the plane orthogonal to $\bm{v}_3$, and the $y$ axis by
computing the cross product $\bm{v}_2=\bm{v}_3\wedge\bm{v}_1$.
After normalizing the three vectors, the orthogonal matrix
\[
Q=\begin{bmatrix} \bm{v}_1 & \bm{v}_2 & \bm{v}_3 \end{bmatrix}
\]
determines the sought rotation, which is used for computing the matrices
$\widetilde{N}$ and $L$ in \eqref{lightsrot}.

\section{Numerical experiments}\label{sec:examples}

In this section we illustrate the performance of the algorithm for the solution
of the photometric stereo problem with unknown lighting discussed in the paper.
The Matlab software we developed is available at the web page
\url{http://bugs.unica.it/cana/software/ps3d}.
The first two data sets used in the numerical experiments, as well as the
reconstructed object of \figurename~\ref{synthsurf}, are available on the same web
page as \texttt{mat} files (Matlab data files), and a \texttt{ply} 3D model
file, respectively.
All the computation were performed using Matlab 9.5 on a Debian GNU/Linux
system.
The 3D meshes displayed in the paper were generated by our software and
visualized by the MeshLab open source system for displaying and editing 3D
triangular meshes (\url{www.meshlab.net}).

The software was tested both with synthetic and experimental data sets, in
order to investigate its performance not only when the assumptions on which the
algorithm is based are met, but also in a real-world setting, where the
assumptions are only approximately verified.

\subsection{Synthetic data set}

To assess the accuracy attainable by our implementation in the
ideal situation when all the assumptions of the method are exactly satisfied,
we resorted to a synthetic dataset.
We fixed a disposition of $q=7$ light sources, placing them around the object
at angles 
$(0,\frac{\pi}{4},\frac{3\pi}{4},\pi,\frac{5\pi}{4},\frac{3\pi}{2},\frac{7\pi}{4})$,
and generated a set of digital images by applying the direct model
\eqref{lambertlaw} to the surface represented by the function 
\begin{equation}\label{synthmod}
u(x,y) = \frac{1}{2} \e^x \sin(\pi x) \sin(\pi y),
\end{equation}
on the square domain $[-1,1]\times [-1,1]$. 
Each image is $101\times 101$ pixels, and the albedo equals $\frac{1}{2}$ for
$x^2+y^2<\frac{1}{4}$, and 1 otherwise.
\figurename~\ref{synthdata} and~\ref{synthdataset} show the synthetic surface and
the corresponding data set, respectively.

\begin{figure}[hbt]
\begin{center}
\includegraphics[width=.8\columnwidth]{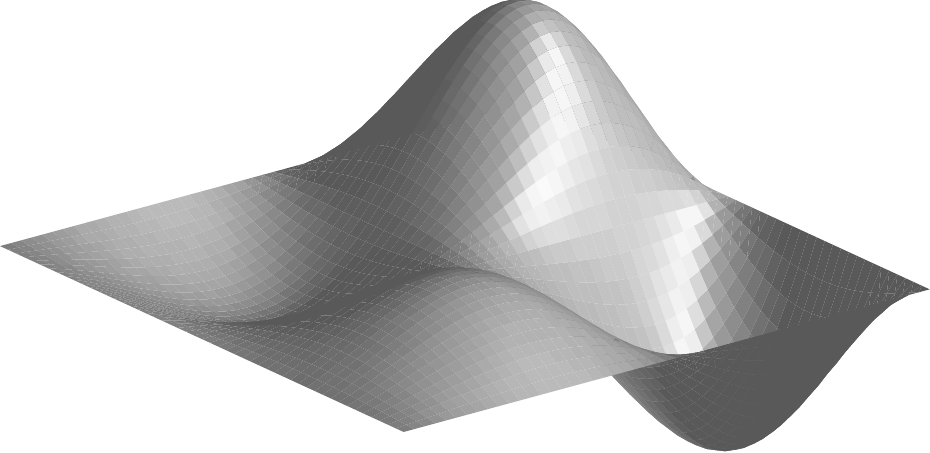}
\caption{A synthetic surface used to investigate the performance of the
algorithm.}
\label{synthdata}
\end{center}
\end{figure}

\begin{figure}[hbt]
\begin{center}
\includegraphics[width=.8\columnwidth]{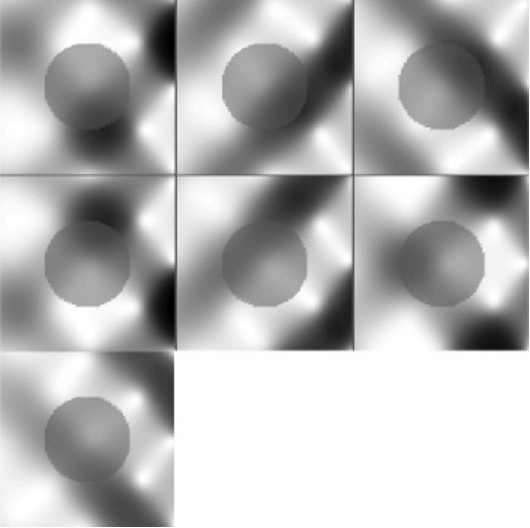}
\caption{Data set collected from the synthetic surface displayed in
\figurename~\ref{synthdata}. It is composed by 7 pictures corresponding to 
different lighting conditions.}
\label{synthdataset}
\end{center}
\end{figure}

\begin{figure}[hbt]
\begin{center}
\hfill
\includegraphics[width=.48\columnwidth]{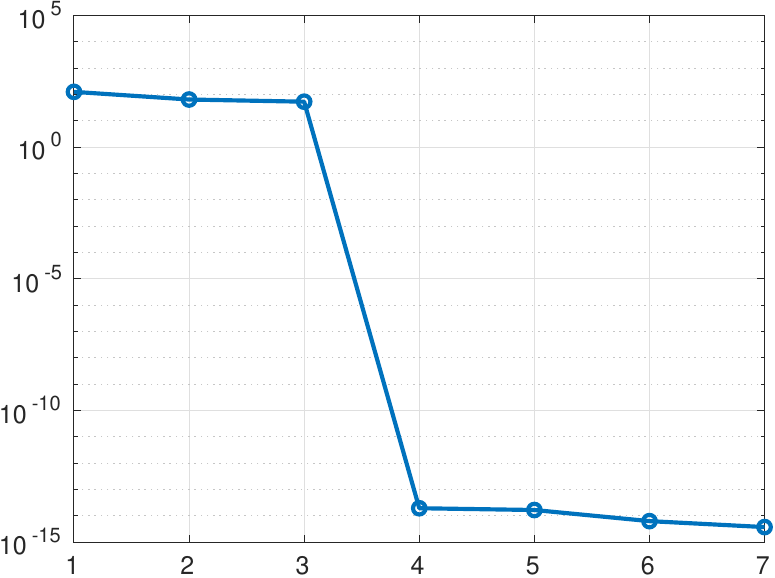}
\includegraphics[width=.48\columnwidth]{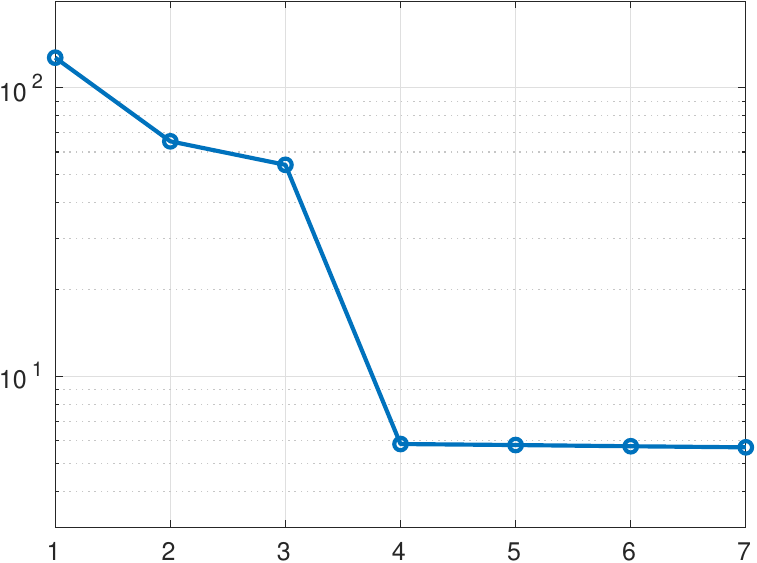}
\caption{On the left, singular values of the data matrix M for the synthetic
data set; on the right, singular values of the same matrix after 10\% Gaussian
noise is added.}
\label{synthsvd}
\end{center}
\end{figure}

The graph on the left of \figurename~\ref{synthsvd} displays the singular values of
the data matrix $M$ from \eqref{eq:fact}, showing that it clearly has numerical
rank $3$.
\figurename~\ref{synthlights} shows the reconstruction of the light vectors and
\figurename~\ref{synthsurf} displays the restored surface.
The light vectors are recovered up to machine precision,
while the relative accuracy on the surface is $2.6\cdot 10^{-4}$, in accord with
the quite large step size $h=\frac{1}{50}$.
The two errors are defined by
$$
E_{\text{lights}} = \frac{\|L-\tilde{L}\|_F}{\|L\|_F}, \quad
E_{\text{surface}} = \frac{\|U-\tilde{U}\|_F}{\|U\|_F},
$$
where $\|\cdot\|_F$ is the Frobenius norm, $(L,U)$ denote the exact matrices
containing the light vectors and the surface slopes, respectively, and
$(\tilde{L},\tilde{U})$ the reconstructed matrices.

\begin{figure}[hbt]
\begin{center}
\includegraphics[width=.8\columnwidth]{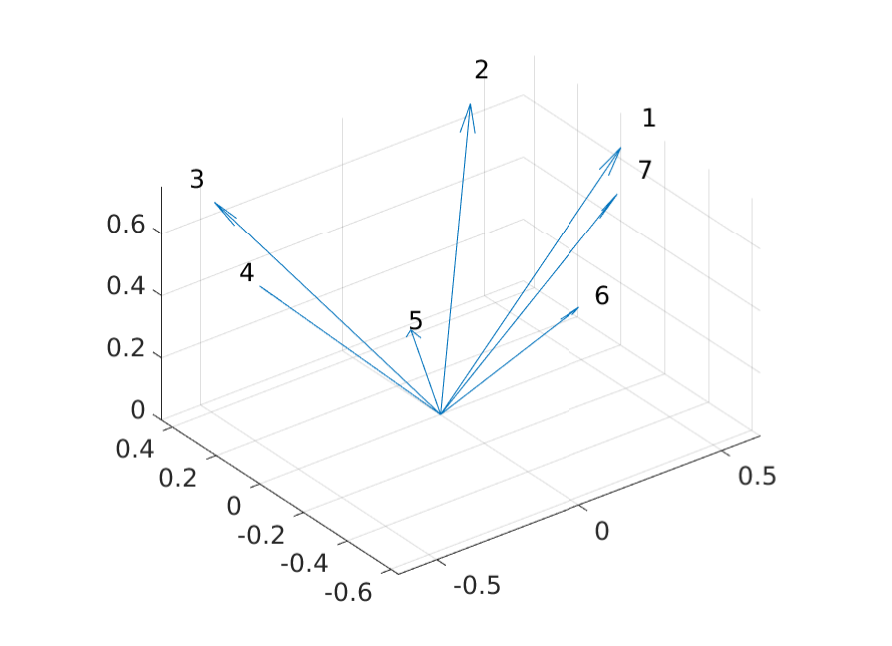}
\caption{Reconstruction of the light directions.}
\label{synthlights}
\end{center}
\end{figure}

\begin{figure}[hbt]
\begin{center}
\includegraphics[width=.8\columnwidth]{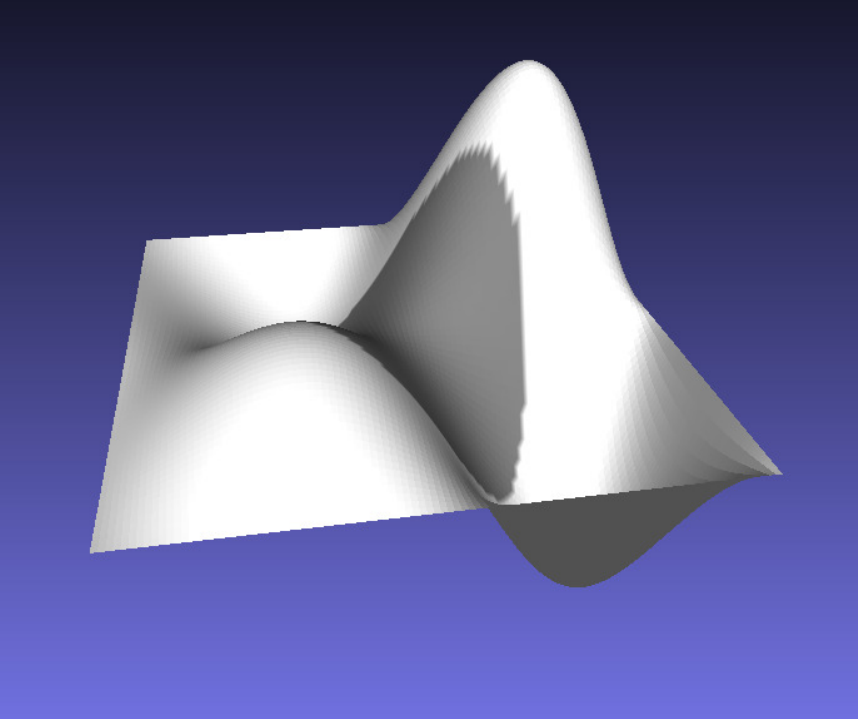}
\caption{Restored surface.}
\label{synthsurf}
\end{center}
\end{figure}

We repeated the same test after introducing 10\% Gaussian noise in the
right-hand side $M$ of \eqref{eq:fact}.
The presence of the noise is reflected in the singular values of $M$, depicted
in the graph on the right of \figurename~\ref{synthsvd}.
Though its rank is 7, it is evident that $M$ can be well approximated by a rank
3 matrix.
The computation is quite steady, as in this case 
$E_{\text{lights}}=3.6\cdot 10^{-3}$, while 
$E_{\text{surface}}=1.5\cdot 10^{-2}$.

\begin{figure}[hbt]
\begin{center}
\includegraphics[width=.8\columnwidth]{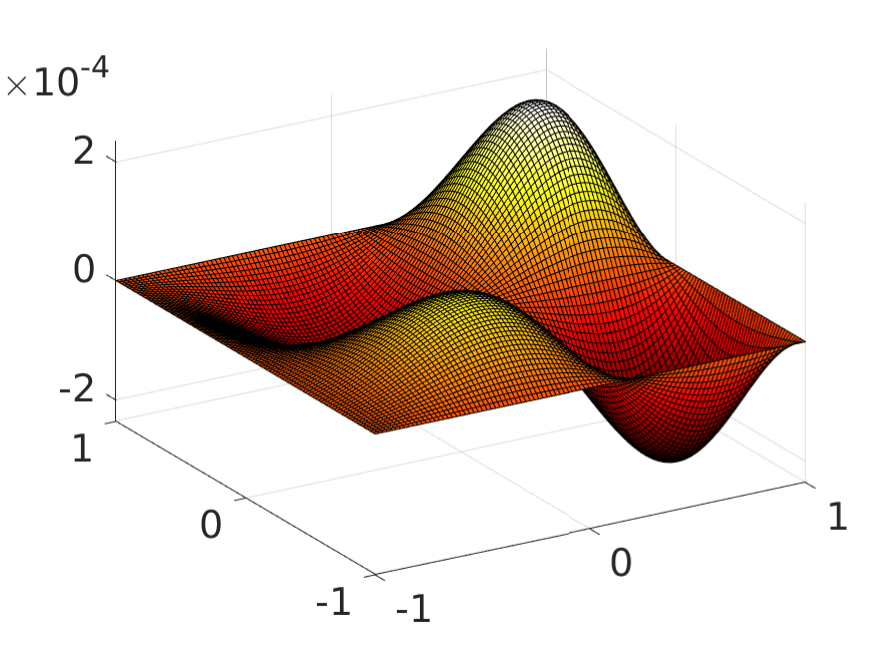}
\caption{Reconstruction error with Dirichlet boundary conditions.}
\label{syntherrdir}
\end{center}
\end{figure}

\begin{figure}[hbt]
\begin{center}
\includegraphics[width=.8\columnwidth]{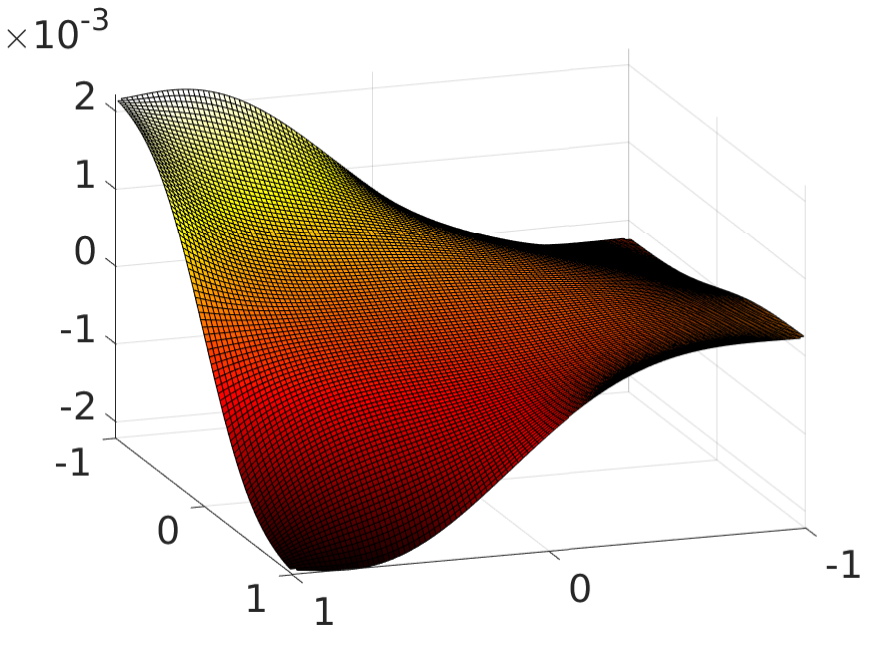}
\caption{Reconstruction error with Neumann boundary conditions.}
\label{syntherrneu}
\end{center}
\end{figure}

The reconstruction in \figurename~\ref{synthsurf} was obtained by imposing
homogeneous Dirichlet boundary conditions, which are exactly verified by model
function \eqref{synthmod}. The error $u(x,y)-\tilde{u}(x,y)$ between the model
and the reconstruction is displayed in \figurename~\ref{syntherrdir}.
The graph in \figurename~\ref{syntherrneu} shows the error corresponding to
exact Neumann conditions. In this case we fixed the value of the solution at
the central point of the domain, that is, $\gamma=u(0,0)=0$; see
Remark~\ref{rem:gamma}.
Neumann conditions produced a less accurate reconstruction in proximity of the
border of the domain, compared to Dirichlet conditions.

To investigate how deviation from ideal lighting influences numerical results,
we repeated the above experiment, with Dirichlet conditions and without noise,
positioning the light sources at finite distance $\kappa A$ from the object,
where $\kappa$ is a scale factor and $A$ is the horizontal width of the
observed scene; in this particular example $A=2$. The synthetic images were
generated by a model based on Lambert's law, which considers incident light
rays, rather than parallel rays. The light directions were recovered by the
procedure described in Section~\ref{sec:unknown}.

When $\kappa<\infty$ the matrix $M$ in \eqref{eq:lsprob} is full-rank, and this
deteriorates the approximation accuracy of the rank-3 factorization constructed
in Section~\ref{sec:unknown}.
We measure the closeness of the matrix $M$ to being rank-3 by the ratio
$\sigma_3/\sigma_4$ between the third and the fourth singular value of $M$.
Table~\ref{tab:rank} reports this ratio together with the errors
$E_{\text{lights}}$ and $E_{\text{surface}}$, for values of $\kappa$ ranging
from $\infty$ to 1. For example, when $\kappa=10$ the distance of the light
source from the origin is 10 times the width of the observed scene.

\begin{table}\label{tab:rank}
\caption{Influence of the distance between the object and the light source: the
unit for the distance $\kappa$ is the scene width, the ratio
$\sigma_3/\sigma_4$ represents ``closeness to rank 3'', the errors are relative
in the Frobenius norm.}
\centering
\begin{tabular}{cccc}
\hline
$\kappa$ & $\sigma_3/\sigma_4$ & $E_{\text{lights}}$ & $E_{\text{surface}}$ \\
\hline
$\infty$ & $1.85\cdot 10^{15\strut}$ & $1.00\cdot 10^{-15}$ 
	& $2.69\cdot 10^{-4}$ \\
    1000 & $2.19\cdot 10^{3}$ & $1.95\cdot 10^{-4}$ & $1.39\cdot 10^{-3}$ \\
     100 & $2.18\cdot 10^{2}$ & $1.95\cdot 10^{-3}$ & $1.41\cdot 10^{-2}$ \\
      10 & $2.15\cdot 10^{1}$ & $1.95\cdot 10^{-2}$ & $1.45\cdot 10^{-1}$ \\
       1 & $1.77$ & $4.52\cdot 10^{-1}$ & $3.89$ \\
\hline
\end{tabular}
\end{table}

It is immediate to observe that when $\kappa$ takes value close to 1, 
the error produced in the light directions is amplified in
the object reconstruction, leading to unacceptable results.
We observed that the algorithm may fail in some situation, as the
deviation from ideality can lead to a non positive definite matrix $G$ in
\eqref{eq:diag}, causing an unrecoverable error.
This is one of the drawbacks of the numerical method, that must be faced in
future research.

\subsection{Experimental data sets}

An experimental data set partially satisfying the assumptions required by the
reconstruction method was generated as follows.
A seashell (approximate width 10~cm) was placed face up on a horizontal desk
with a black background.
The camera was held by a tripod about 1~m above the seashell.
The flat background was intended to reproduce homogeneous
Dirichlet boundary conditions for the observed surface.
The desk, standing in the open air under direct sunlight, was rotated in
order to take 20 pictures of the seashell with different lighting directions,
according to the shooting procedure described in Section~\ref{sec:orient}. The
resulting data set is displayed in \figurename~\ref{shelldata}.

\begin{figure}[hbt]
\begin{center}
\includegraphics[width=.8\columnwidth]{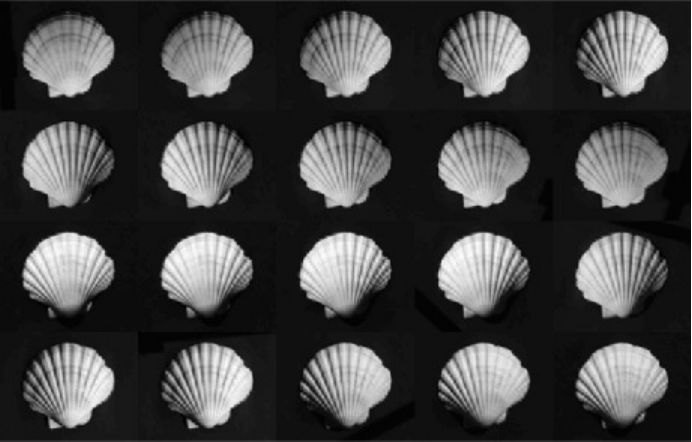}
\caption{The \textsc{shell} data set, consisting in 20 images
corresponding to different sunlight directions.}
\label{shelldata}
\end{center}
\end{figure}

\begin{figure}[hbt]
\begin{center}
\includegraphics[width=.8\columnwidth]{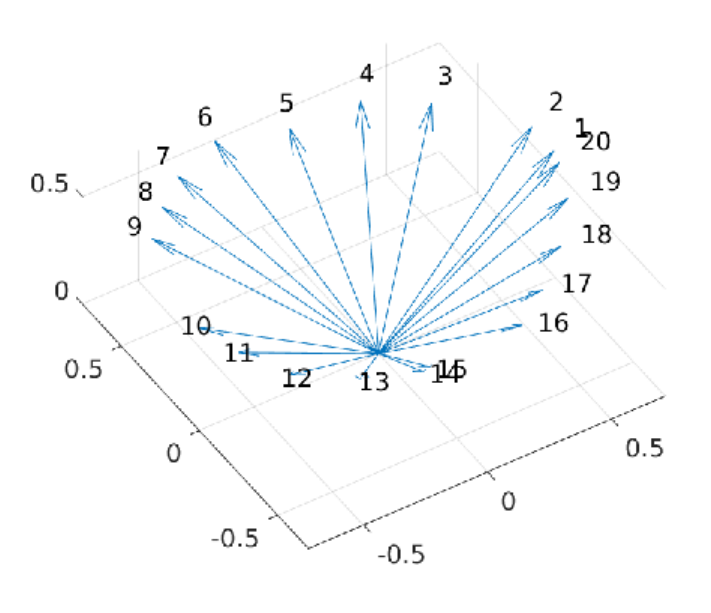}
\caption{Light directions identified by the reconstruction algorithm for
the \textsc{shell} data set.}
\label{shellights}
\end{center}
\end{figure}

While the relatively small distance between the camera and the object produces
images which cannot be represented through the orthographic projection model,
the sunlight rays can be assumed to be parallel.
So the lighting verifies the assumption of Lambert's model and we expect the
data matrix $M$ to be approximately rank-3.

\begin{figure}[hbt]
\begin{center}
\hfill
\includegraphics[width=.52\columnwidth]{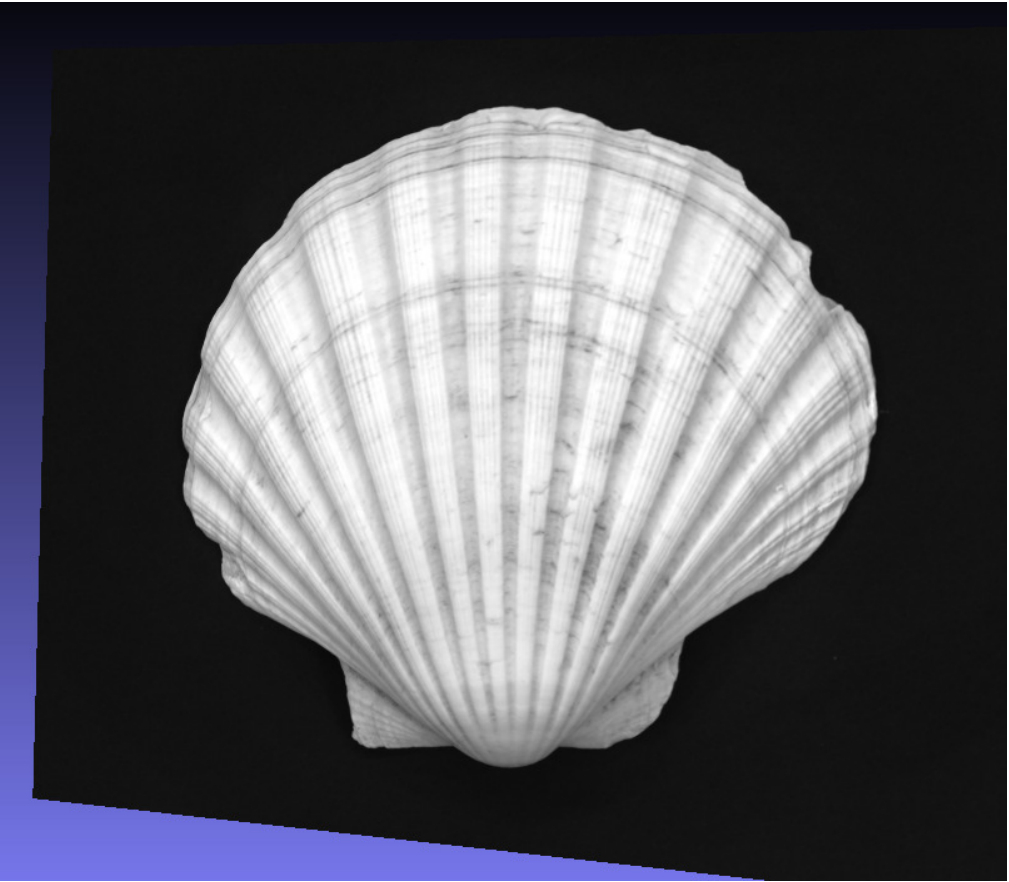}
\includegraphics[width=.45\columnwidth]{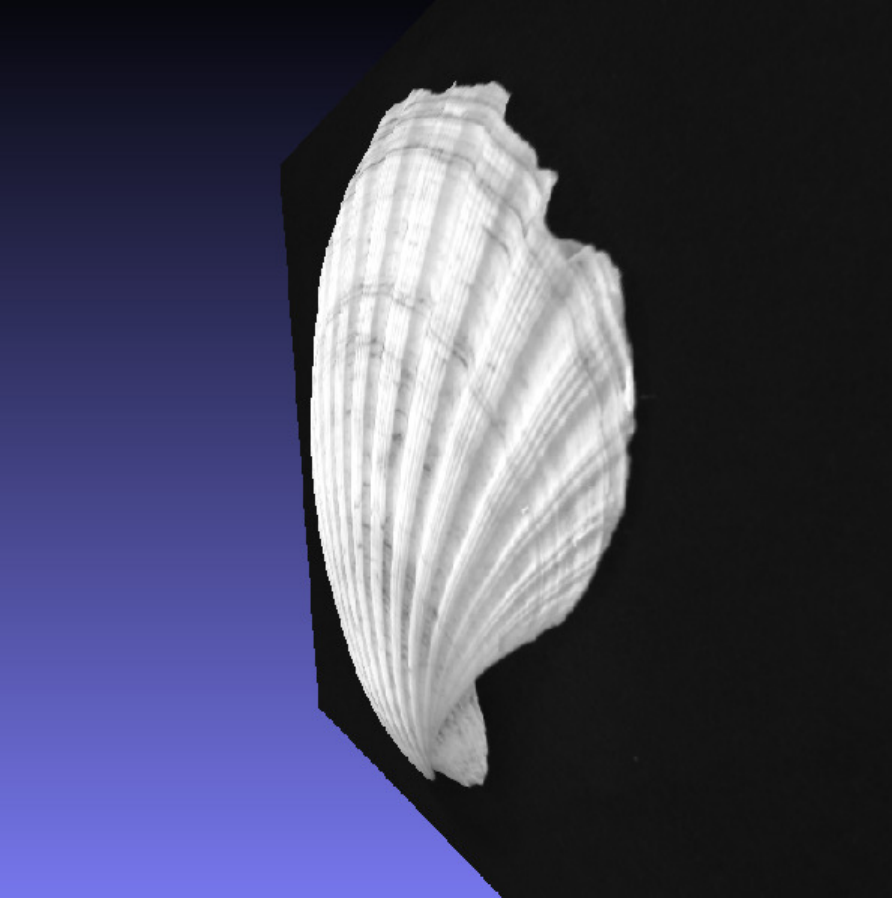}
\caption{Two different views of the 3D surface reconstructed from the
\textsc{shell} data set.}
\label{shellresults}
\end{center}
\end{figure}

The digital pictures were recorded in \emph{raw} mode at the resolution of
$3477\times 5220$ pixels, and for this particular numerical simulation they
were scaled to $885\times 705$ pixels.
The procedure described in Sections~\ref{sec:unknown} and~\ref{sec:orient}
identified 20 light vectors, displayed in \figurename~\ref{shellights}, that are
compatible with the sunlight position during the shooting process.

By solving the Poisson equation \eqref{poisson} with homogeneous Dirichlet
boundary conditions, one obtains the 3D model illustrated in
\figurename~\ref{shellresults} by two different views.
Compared to the original, the model appears slightly deformed by the deviation
from the orthographic projection model, but the reconstruction is quite
accurate and the computing time negligible, 2.8 seconds on an Intel Core i7
computer.
The deformation induced by the camera system could be corrected by camera
calibration techniques.

\begin{figure}[hbt]
\begin{center}
\includegraphics[width=.37\columnwidth]{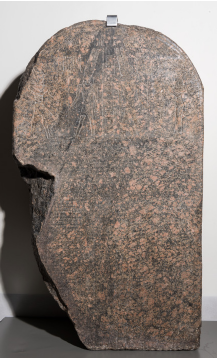}
\hfill
\includegraphics[width=.61\columnwidth]{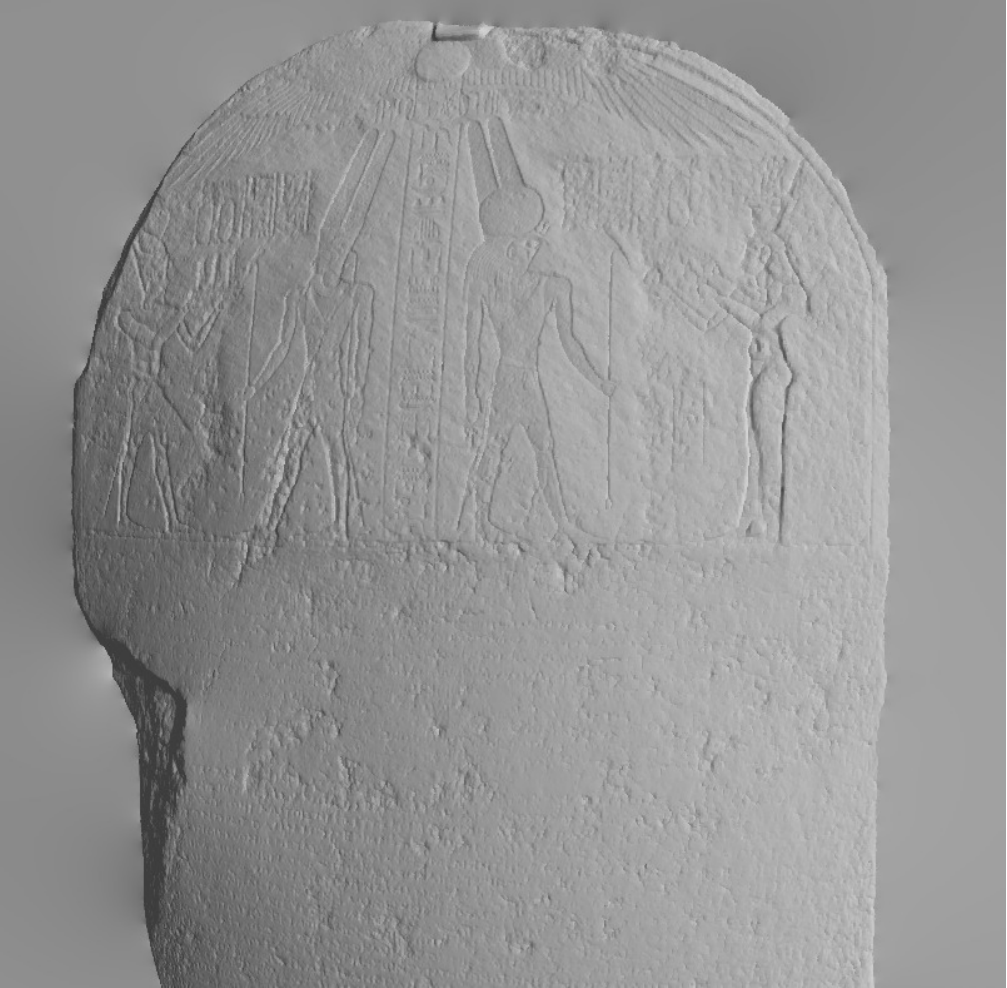}
\hfill
\caption{On the left, stela in honor of the general Callimachos, mentioning
Cleopatra and Caesarion; granite, Ptolemaic period, reign of Cleopatra VII, 39
BC. Thebes, Temple of Karnak (Courtesy of Museo Egizio, Torino, Italy). On the
right, 3D reconstruction obtained by the algorithm described in the paper.}
\label{steledata}
\end{center}
\end{figure}

As a second experiment based on a real data set, we processed the images of a
stela of the Ptolemaic period, exhibited at the Museo Egizio in Torino, Italy
(\url{https://www.museoegizio.it}); see \figurename~\ref{steledata}.
We thank the Museo Egizio, in particular Christian Greco, director of the
museum, and Marco Rossani, collection manager, for providing us the data set,
which is composed by 8 images.
A black mask was added around the stela in each image, in order to reproduce
Dirichlet boundary conditions.
The resolution of the \emph{raw} digital pictures is $7360\times 4912$ pixels.
The images were scaled to $1474\times 2208$ pixels to produce the result
depicted on the right of \figurename~\ref{steledata}, where the surface is displayed
after removing the albedo.
The computation took 16 seconds.

\begin{figure}[hbt]
\begin{center}
\includegraphics[width=.32\columnwidth]{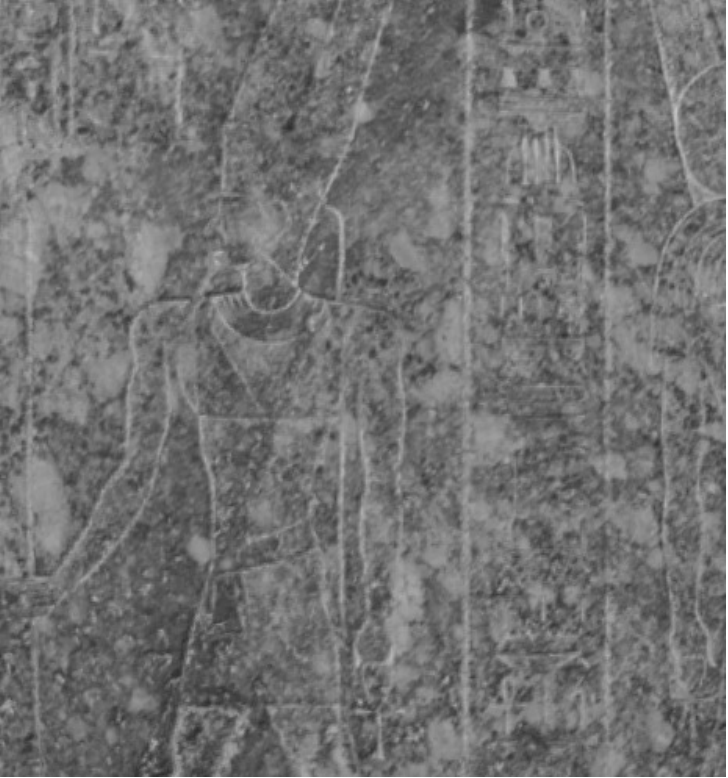}
\includegraphics[width=.32\columnwidth]{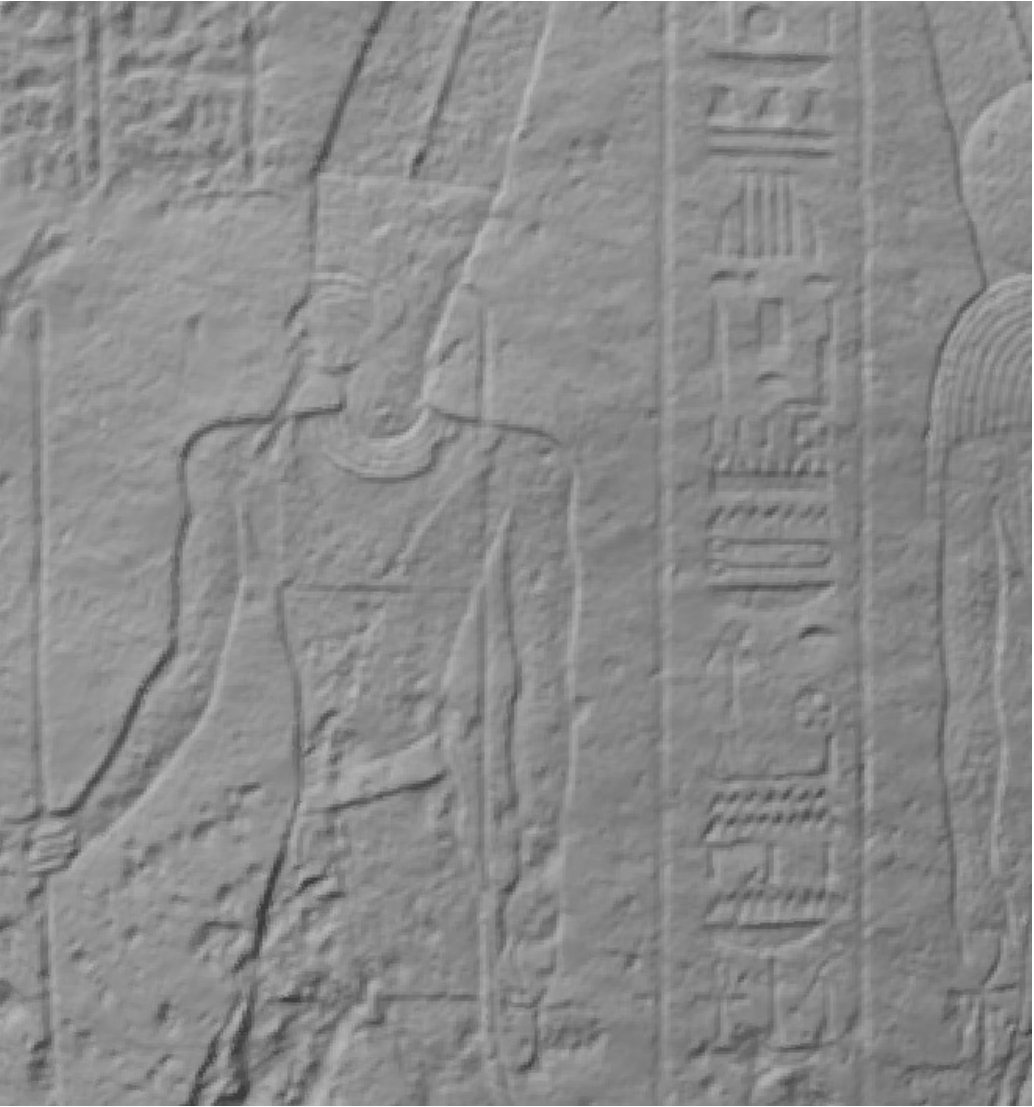}
\includegraphics[width=.32\columnwidth]{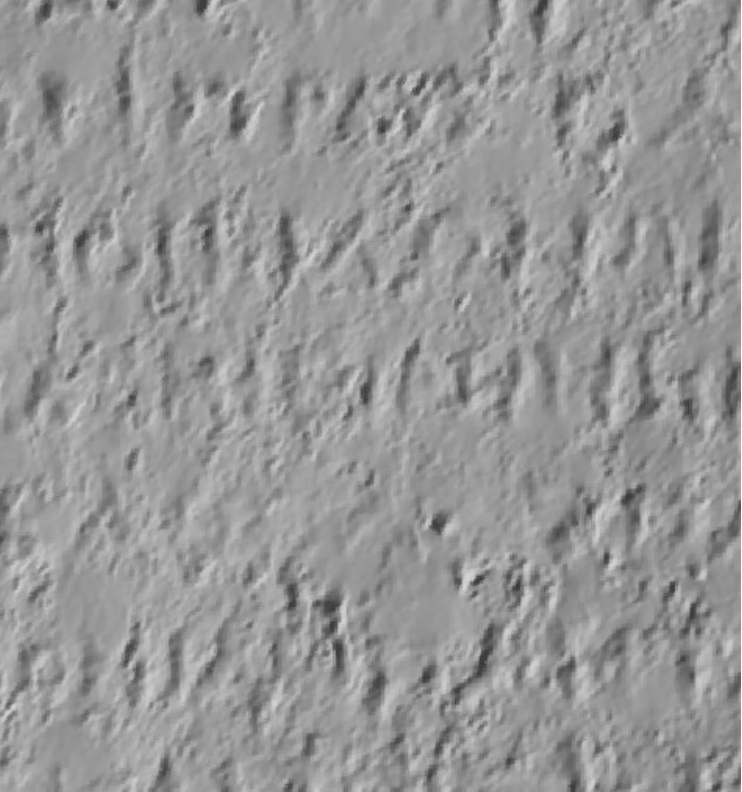}
\caption{Three details of the Ptolemaic stela reconstruction: the first two
show the same area with and without albedo, the third one a part containing
a writing.}
\label{stelepart}
\end{center}
\end{figure}

The pictures were taken in the exposition room of the museum using an
electronic flash as a light source, so they do not satisfy the assumptions of
the model, being both the camera and the light source at a quite small distance
from the object. Indeed, while the bas-relief details are accurately
reproduced, the reconstructed surface is spherically warped, compared to the
flatness of the real stela.

Some details of the reconstructed surface are displayed in
\figurename~\ref{stelepart}.
The left and central pictures show how albedo removal can lead to a cleaner
visualization of an engraving.
The image on the right is a part of the reconstruction obtained by processing
a $800\times 800$ sub-image of the original high resolution pictures. It shows
a writing, located in the central part of the stela.
To obtain a neat representation of small writings one would need high
resolution images only of this part, but this would introduce difficulties in
assigning boundary conditions to the Poisson equation.
We will face this problem in future work.

\section{Conclusions and future perspectives}\label{sec:future}

In this paper, under the ideal assumptions upon which Lambert's model is based,
we investigate the performance of the Hayakawa \cite{hayakawa1994} method
for estimating the lights position in photometric stereo.
This procedure can be applied when at least 6 images of an object are
available, each one with a different light source.
We show that this approach can be effectively used to reconstruct the 3D model
of an observed object, and we propose a procedure to avoid the indetermination
in the direction of the normal vectors, typical of photometric stereo.
We also make available the software we developed.

The accuracy of our method is investigated through numerical experiments on
both synthetic and real data sets.
Our experiments show that the algorithm is accurate under ideal conditions, and
that the lack from ideality produces, as expected, a spherical deformation on
the reconstructed surfaces.

Our future research work will be devoted to improving the method, in order to
make it applicable to real shooting conditions, in particular when the light
sources cannot be positioned sufficiently far away from the observed object. 
Moreover, real time processing of high resolution pictures requires a reduction
in the computing time, especially for what concerns the data matrix
factorization and the solution of the final large linear system.
Another aspect that needs further work is the treatment of the boundary
conditions required for the solution of equation \eqref{poisson}, whose
knowledge is not available in many applicative situations.

\section*{Acknowledgment}
We thank the Museo Egizio (Torino, Italy) for providing us the data set used in
the numerical experiments.
The research in this paper was partially supported by
the Fondazione di Sardegna 2017 research project ``Algorithms for Approximation
with Applications [Acube]'',
the INdAM-GNCS research project ``Tecniche numeriche per l'analisi delle reti
complesse e lo studio dei problemi inversi'', 
and the Regione Autonoma della Sardegna research project ``Algorithms and
Models for Imaging Science [AMIS]'' (RASSR57257, intervento finanziato con
risorse FSC 2014-2020 - Patto per lo Sviluppo della Regione Sardegna).
AC and CF gratefully acknowledge Regione Autonoma della
Sardegna for the financial support provided under the Operational Programme
P.O.R. Sardegna F.S.E. (European Social Fund 2014-2020 - Axis III Education
and Formation, Objective 10.5, Line of Activity 10.5.12).


\end{document}